\newcommand{\comments}[1]{}
\renewcommand{\leq}{\leqslant}
\renewcommand{\geq}{\geqslant}
\newcommand{\GammaL}{\mathrm{\Gamma L}}
\newcommand{\GL}{\mathrm{GL}}
\newcommand{\SL}{\mathrm{SL}}
\newcommand{\bF}{\mathbb{F}}
\newcommand{\bK}{\mathbb{K}}
\newcommand{\bL}{\mathbb{L}}
\newtheorem{prop}{Proposition}
\newtheorem{thm}[prop]{Theorem}
\newtheorem{conj}{Conjecture}
\newtheorem{cor}[prop]{Corollary}
\newtheorem{lem}[prop]{Lemma}
\theoremstyle{definition}
\begin{document}

\title{On a conjecture of Degos}

\author{Nick Gill}
\address{
Department of Mathematics\\
University of South Wales\\
Treforest, CF37 1DL\\
U.K.}
\email{nicholas.gill@southwales.ac.uk}

\begin{abstract}
In this note we prove a conjecture of Degos concerning groups generated by companion matrices in $\GL_n(q)$.
\end{abstract}

\maketitle

Let $\bF$ be a field, and let $f\in\bF[X]$ be a polynomial of degree $n$, i.e.
\[f(X)=a_nX^n+a_{n-1}X_{n-1}+\cdots+a_1X+a_0\] where $a_0,\dots, a_n\in\bF$. Recall that the {\it companion matrix} of $f$ is the $n\times n$ matrix
\[
 C_f:=\begin{bmatrix}
  0 & \cdots & \cdots & \cdots & 0 & -a_0 \\
  1 & 0 & & & 0 & -a_1 \\
  0 & 1 & 0 &  & 0 & -a_2 \\
  \vdots & \ddots & \ddots & \ddots & \vdots & \vdots \\
  \vdots && \ddots & 1 & 0 & -a_{n-2} \\
   0&\cdots&\cdots& 0 & 1 & -a_{n-1} 
 \end{bmatrix}.
\]
The matrix $C_f$ has the property that its minimal polynomial and its characteristic polynomial are both equal to $f$. Conversely, if $g\in \GL_n(\bF)$ has minimal polynomial and characteristic polynomial both equal to some polynomial $f$, then $g$ is conjugate in $\GL_n(\bF)$ to $C_f$.

Recall in addition that if $\bF$ has order $q$ and $f\in\bF[X]$ has degree $n$, then $f$ is called {\it primitive} if it is the minimal polynomial of a primitive element $x\in \bF$. In \cite{degos}, J.-Y.~Degos makes the following conjecture.

\begin{conj}\label{c: degos}
Let $\bF$ be a field of order $p$ a prime, let $g=X^n-1$ and let $f\in\bF[X]$ be a primitive polynomial of degree $n$. Then $\langle C_f, C_g\rangle=\GL_n(p)$.
\end{conj}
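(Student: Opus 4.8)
The plan is to exploit the fact that $C_f$ generates a Singer cycle and to invoke the classification of the overgroups of such a cycle, using the second generator to eliminate every proper possibility. I first record three reductions. Since $f$ is primitive, a root $\alpha$ of $f$ generates $\bF_{p^n}^\times$; as the characteristic polynomial of $C_f$ equals $f$, the matrix $C_f$ has order $p^n-1$ and acts irreducibly, so $S:=\langle C_f\rangle$ is a Singer cycle of $\GL_n(p)$. Its determinant is the norm $N_{\bF_{p^n}/\bF_p}(\alpha)=\alpha^{(p^n-1)/(p-1)}$, which has order exactly $p-1$; hence $\det$ maps $S$, and therefore $G:=\langle C_f,C_g\rangle$, onto $\bF_p^\times$. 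Consequently it suffices to prove $\SL_n(p)\leq G$, for then $G=\GL_n(p)$ follows from surjectivity of the determinant. The decisive third reduction is to examine $t:=C_g^{-1}C_f$: since $C_f$ and $C_g$ both act as $e_j\mapsto e_{j+1}$ on $e_1,\dots,e_{n-1}$ and differ only on $e_n$, the element $t$ fixes the hyperplane $\langle e_1,\dots,e_{n-1}\rangle$ pointwise, and $t\neq 1$ because $f\neq X^n-1$ (the latter is reducible, hence not primitive, for $n\geq 2$). So $G$ contains a nontrivial element fixing a hyperplane pointwise.

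Next I would apply the classification of subgroups of $\GL_n(p)$ containing a Singer cycle (Kantor; or, through the primitive-prime-divisor machinery of Guralnick, Penttila, Praeger and Saxl, using a Zsygmondy prime $\ell\mid p^n-1$ that divides $|S|$). Such a $G$ either contains $\SL_n(p)$, or is conjugate into one of: the normalizer $N(S)=\GammaL_1(p^n)$; a field-extension subgroup $\GammaL_{n/s}(p^s)$ for some prime $s\mid n$; a subgroup preserving a nondegenerate classical form; or one of finitely many exceptions in small dimension. My task is then to rule out every alternative except the first.

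The element $t$ disposes of the field-extension alternatives at a stroke. Any $\bF_{p^s}$-linear map has fixed space an $\bF_{p^s}$-subspace, whose $\bF_p$-dimension is divisible by $s$, so for $s\geq 2$ it cannot fix a hyperplane (codimension $1$) pointwise; and for the remaining semilinear elements of $\GammaL_{n/s}(p^s)$ a direct count shows the fixed $\bF_p$-space of a map $v\mapsto A\,v^{(p^{j})}$ with $j\not\equiv 0$ cannot be a hyperplane once $n\geq 3$, since its nonzero fixed vectors number at most $p^{\gcd(j,n)}-1<p^{n-1}-1$. Taking $s=n$ this argument simultaneously excludes $N(S)=\GammaL_1(p^n)$. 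For the classical-form alternative I would argue on eigenvalues: if $C_f$ preserved a nondegenerate symmetric or alternating form, then its eigenvalue multiset $\{\alpha^{p^i}\}_{i=0}^{n-1}$ would be closed under inversion, forcing $\alpha^{p^j+1}=1$ for some $0\leq j<n$; but $\alpha$ has order $p^n-1>p^j+1$, a contradiction for $n\geq 2$. Unitary forms do not arise at all, since $p$ is prime and hence not a square.

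The cases $n\leq 2$, together with the bounded list of small exceptions produced by the classification, I would settle directly, either by the same hyperplane and eigenvalue considerations or, where needed, by a short explicit check that $\langle C_f,C_g\rangle$ is already too large to lie inside the relevant maximal subgroup. The main obstacle I anticipate is precisely this final bookkeeping: pinning down the finite list of exceptional $(n,p)$ in the Singer-overgroup classification and verifying each, and making the semilinear fixed-space count for $\GammaL_{n/s}(p^s)$ fully rigorous in the borderline small dimensions. The generic argument, resting on the reflection $t$ and the norm computation for $\det C_f$, is otherwise uniform and clean.
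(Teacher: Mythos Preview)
Your approach is essentially the paper's: both rest on (i) $C_f$ generating a Singer cycle, (ii) the observation that $C_g^{-1}C_f$ is nontrivial and fixes the hyperplane $\langle e_1,\dots,e_{n-1}\rangle$ pointwise, and (iii) a classification of overgroups of Singer cycles combined with the fact that no nontrivial element of a proper field-extension subgroup can fix an $\bF$-hyperplane. The only difference is packaging. The paper quotes Kantor's theorem in its sharp form---\emph{every} proper subgroup of $\GL_n(q)$ containing a Singer cycle lies inside a field-extension subgroup $\Phi_{\mathcal B}(\GammaL_{\bK/\bF}(W))$ with $[\bK:\bF]>1$, with no classical-form cases and no sporadic exceptions---and then, because of the known gaps in the Cameron--Kantor argument, re-proves it via Hering's theorem and CFSG (absorbing the symplectic, $G_2$, and small cases into that re-proof). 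With Kantor's theorem in this form, the hyperplane argument alone finishes, so your separate eliminations of form-preserving subgroups and small exceptions, as well as the determinant reduction to $\SL_n(p)$, are correct but unnecessary.
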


We will prove a stronger version of this conjecture. Specifically, we prove the following.

\begin{thm}\label{t: main}
Let $\bF$ be a finite field of order $q$ and let $f, g\in\bF[X]$ be distinct polynomials of degree $n$ such that $f$ is primitive, and the constant term of $g$ is non-zero. Then $\langle C_f, C_g\rangle=\GL_n(q)$.
\end{thm}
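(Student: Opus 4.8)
The plan is to exploit two facts in tandem: that $C_f$, being the companion matrix of a primitive polynomial, generates a Singer cycle, and that $C_f$ and $C_g$ differ only in their last column. First I would record that $S:=\langle C_f\rangle$ is cyclic of order $q^n-1$ (the order of a root $\omega$ of the primitive polynomial $f$) and acts irreducibly on $V=\bF_q^n$: identifying $V$ with $\bF_{q^n}$ so that $C_f$ becomes multiplication by $\omega$, any $S$-invariant $\bF_q$-subspace is an $\bF_{q^n}$-subspace, hence $0$ or $V$. Thus $G:=\langle C_f,C_g\rangle$ is an irreducible subgroup of $\GL_n(q)$ containing a Singer cycle, and the hypothesis that $g$ has nonzero constant term guarantees $\det C_g=(-1)^n g(0)\neq 0$, so that $C_g\in\GL_n(q)$ in the first place.

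The key structural input is the element $t:=C_fC_g^{-1}$. Since $f\neq g$ share the same degree, $C_f-C_g$ is supported on its last column, so $C_f-C_g=u\,e_n^{\mathsf T}$ for some nonzero $u\in V$; hence $t=I+u\,(e_n^{\mathsf T}C_g^{-1})=I+u\,w^{\mathsf T}$ is a rank-one perturbation of the identity, and $t\neq I$ precisely because $f\neq g$. Therefore $t$ fixes the hyperplane $\ker w^{\mathsf T}$ pointwise and scales a complementary line by $\det t=f(0)/g(0)$; that is, $t$ is a transvection when $f(0)=g(0)$ and a homology otherwise. In both cases $G$ contains a non-scalar element whose fixed space has $\bF_q$-dimension exactly $n-1$, and conjugating by $S$ spreads such elements over many distinct fixed hyperplanes.

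With a Singer cycle and a non-trivial hyperplane-fixing element in hand, I would invoke the classification of irreducible subgroups of $\GL_n(q)$ containing a Singer cycle (due to Kantor) to reduce to a short list: (a) $G\geq\SL_n(q)$; (b) $G\leq\GammaL_1(q^n)$, the normalizer of $S$; (c) $G$ lies in a field-extension subgroup inside $\GammaL_{n/r}(q^r)$ for some prime $r\mid n$; (d) $G$ preserves a classical form; or (e) one of finitely many exceptions. The element $t$ is tailored to eliminate the geometric cases. For (b), a computation with the eigenvalue/fixed-point structure of the maps $x\mapsto a\,x^{q^i}$ shows that no non-identity element of $\GammaL_1(q^n)$ fixes a hyperplane pointwise once $n\geq 3$. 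For (c), the fixed space of an $\bF_{q^r}$-(semi)linear element has $\bF_q$-dimension divisible by $r$, whereas $\dim_{\bF_q}\ker w^{\mathsf T}=n-1\not\equiv 0\pmod r$. Case (d) is ruled out because a classical group on $V$ contains no cyclic subgroup of order $q^n-1$ (the maximal torus orders are far smaller). The finitely many exceptions in (e), together with the small cases $n\leq 2$ where the argument for (b) degenerates, I would dispatch by direct inspection. This forces $G\geq\SL_n(q)$.

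Finally I would run the determinant argument, which is where primitivity of $f$ re-enters. Writing $\omega$ for a root of $f$, one has $\det C_f=(-1)^n f(0)=\omega^{(q^n-1)/(q-1)}=N_{\bF_{q^n}/\bF_q}(\omega)$, an element of order exactly $q-1$, hence a generator of $\bF_q^{\times}$. Since $\det\colon\GL_n(q)\to\bF_q^{\times}$ has kernel $\SL_n(q)$ and $G\geq\SL_n(q)$, the membership $\det C_f\in\det G$ yields $\det G=\bF_q^{\times}$ and therefore $G=\GL_n(q)$. I expect the main obstacle to be the case analysis of the previous paragraph—specifically the clean elimination of the field-extension subgroups (c) and the bookkeeping for the exceptional list (e) and the degenerate low-dimensional cases—rather than the Singer-cycle or determinant steps, which are essentially routine.
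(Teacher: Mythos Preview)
Your proposal is correct and follows essentially the same route as the paper: both arguments hinge on the fact that $C_f$ generates a Singer cycle, that $C_fC_g^{-1}$ (the paper uses $C_f^{-1}C_g$) is a non-identity element fixing a hyperplane pointwise, and that Kantor's classification of overgroups of a Singer cycle then forces $G=\GL_n(q)$; the paper simply packages your cases (b)--(e) into a single statement (its Theorem~6) asserting that any proper overgroup of a Singer cycle lies in a field-extension subgroup, and thereby avoids your separate determinant step. The one place where your sketch is slightly loose is case~(c): the fixed space of a genuinely \emph{semi}linear element need not have $\bF_q$-dimension divisible by $r$, so there you would need a fixed-point count (as in the paper's Lemma~3) rather than the divisibility argument.
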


For the rest of this paper $\bF$ is a finite field of order $q$.

\section{Field-extension subgroups}\label{s: fe}

Let $\bK=\bF(\alpha)$ be an algebraic extension of $\bF$ of degree $d$. Let $W=\bK^a$, and observe that $W$ is both an $a$-dimensional vector space over $\bK$ and an $ad$-dimensional space over $\bF$. 

A {\it $\bK/\bF$-semilinear automorphism} of $W$, $\phi$, is an invertible map $\phi:W\to W$ for which there exists $\sigma \in {\rm Gal}(\bK/\bF)$ such that, for all $v_1,v_2\in W$ and $k_1,k_2\in \bK$,
\[
\phi(k_1v_1+k_2v_2)=k_1^\sigma \phi(v_1)+ k_2^\sigma \phi(v_2).
\]
%The automorphism $\sigma$ is called the {\it associated field automorphism} of $\phi$. 
We define a group
\[
\GammaL_{\bK/\bF}(W)=\{ \phi: W \to W \mid \phi\textrm{ is a $\bK/\bF$-semilinear automorphism of }W\}.
\]
The group $\GammaL_{\bK/\bF}(W)$ can be written as a product $\GL_{a}(\bK).F$ where $F$ is a cyclic group of degree $d$ generated by the automorphism
\[
W \to W, \, \, (w_1, \dots, w_d)\mapsto (w_1^q, \dots, w_d^q).
\]
We will refer to elements of $F$ as {\it field-automorphisms} of $W$.

Now, for $\mathcal{B}=\{v_1,\dots, v_{ad}\}$ an ordered $\bF$-basis of $W$ and $\phi\in \GammaL_{\bK/\bF}(W)$, we define the following matrix
\[
 (\phi)_\mathcal{B}=\left[\begin{array}{c|c|c|c}
                     \phi(v_1) &  \phi(v_2) & \cdots & \phi(v_{ad})
                    \end{array}\right].
\]
It is a well-known fact that the map
\[
\Phi_\mathcal{B}: \GammaL_{\bK/\bF}(W) \to \GL_{ad}(q), \phi\mapsto (\phi)_\mathcal{B}
\]
is a well-defined injective group homomorphism, the image of which is a group $E$ known as a {\it field-extension subgroup of degree $d$} in $\GL_{ad}(q)$. Indeed, more is true: if we define
\[
 \theta: W \to \bF^{ad}, w\mapsto [w]_\mathcal{B},
\]
and consider $\Phi_\mathcal{B}$ to be a map $\GammaL_{\bK/\bF}(W) \to E$, then the pair $(\Phi,\theta)$ is a permutation group isomorphism. (Here, and throughout this note, we consider groups acting on the left.)

Note that the group $\GammaL_{\bK/\bF}(W)$ contains a unique normal subgroup $N$ isomorphic to $\GL_a(\bK)$. Then $H=\Phi_\mathcal{B}(N)$ is a subgroup of $\GL_{ad}(q)$ isomorphic to $\GL_a(\bK)$ and, writing $G=\GL_{ad}(q)$, one can check that $N_G(H)=E$, the associated field-extension subgroup. (To see this, note, firstly, that $E\leq N_G(H)\leq N_G(Z(H))$; now \cite[Proposition 4.3.3 (ii)]{kl} asserts that $N_G(Z(H)))=E$ and we are done.)

\section{Singer cycles}

Recall that a {\it Singer subgroup} of the group $\GL_n(q)$ is a cyclic subgroup of order $q^n-1$. In this section we prove the following lemma.

\begin{lem}\label{l: singer}
Let $g\in \GL_n(q)$ and let $f$ be its minimal polynomial. Then $\langle g\rangle$ is a Singer subgroup if and only if $f$ is primitive of degree $n$.

What is more, if $S=\langle g\rangle$ is a Singer subgroup, then $\langle g\rangle$ is conjugate to $\langle C_f\rangle$, and $S=\Phi_{\mathcal{B}}(GL_1(\mathbb{K}))$, where $\bK$ is a degree $n$ extension of $\bF$, and $\mathcal{B}$ is an ordered $\bF$-basis of $\bK$.
\end{lem}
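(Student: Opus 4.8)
The plan is to translate everything into the language of $\bF[X]$-modules and then reduce the order computation to counting units in a finite commutative algebra. Write $V=\bF^n$ and regard it as an $\bF[X]$-module via $X\mapsto g$; then $\bF[g]\cong \bF[X]/(f)$ as $\bF$-algebras, and since $g$ is invertible it lies in the unit group $\bF[g]^\times\cong(\bF[X]/(f))^\times$, so the order of $g$ as a group element equals its order in this unit group. The key arithmetic input is the order of that unit group: factoring $f=\prod_i p_i^{e_i}$ into distinct irreducibles $p_i$ of degree $d_i$, the Chinese Remainder Theorem gives $(\bF[X]/(f))^\times\cong\prod_i(\bF[X]/(p_i^{e_i}))^\times$, and each local factor has unit group of order $q^{d_i(e_i-1)}(q^{d_i}-1)$. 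Since $q^n-1$ is coprime to the characteristic $p$ of $\bF$, I will only ever need the $p'$-part of this order, which is $\prod_i(q^{d_i}-1)$.

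For the easy direction, suppose $f$ is primitive of degree $n$. Then $f$ is irreducible of degree $n=\dim_\bF V$, so the minimal and characteristic polynomials of $g$ coincide, $V$ is a cyclic $\bF[g]$-module, and $V\cong\bK:=\bF[X]/(f)\cong\bF_{q^n}$ with $g$ acting as multiplication by a root $\alpha$ of $f$. As $f$ is primitive, $\alpha$ generates $\bK^\times$, so $g$ has order $q^n-1$ and $\langle g\rangle$ is a Singer subgroup.

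The main obstacle is the converse: assuming $|g|=q^n-1$, I must show $f$ is primitive of degree $n$. Here $q^n-1=|g|$ divides the $p'$-part $\prod_i(q^{d_i}-1)$ of the unit group order, while $\sum_i d_i\le\sum_i d_ie_i=\deg f\le n$. I would then prove the elementary number-theoretic lemma that positive integers $d_1,\dots,d_k$ with $\sum_i d_i\le n$ and $(q^n-1)\mid\prod_i(q^{d_i}-1)$ force $k=1$ and $d_1=n$: the bound $\prod_i(q^{d_i}-1)<q^{\sum_i d_i}\le q^n$ together with divisibility pins $\prod_i(q^{d_i}-1)=q^n-1$ and $\sum_i d_i=n$, and the strict inequality $\prod_i(q^{d_i}-1)<q^{\sum_i d_i}-1$ for $k\ge2$ rules out more than one factor. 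This gives $f=p_1$ irreducible of degree $n$ with $e_1=1$, so as above $V\cong\bK=\bF_{q^n}$ and $g$ acts as multiplication by a root $\alpha$; since $|\alpha|=|g|=q^n-1$, $\alpha$ is primitive and hence $f$ is a primitive polynomial of degree $n$.

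For the final assertions, in either case $f$ equals both the minimal and the characteristic polynomial of $g$, so by the fact recalled in the introduction $g$ is conjugate to $C_f$ and thus $\langle g\rangle$ is conjugate to $\langle C_f\rangle$. To realise $S=\langle g\rangle$ as $\Phi_{\mathcal B}(\GL_1(\bK))$, I take $a=1$ so that $W=\bK$, and identify $\GL_1(\bK)$ with the multiplication operators $m_\lambda\colon w\mapsto\lambda w$, $\lambda\in\bK^\times$. Choosing an $\bF$-linear isomorphism $\psi\colon V\to\bK$ intertwining $g$ with $m_\alpha$ (which exists since both are cyclic $\bF[X]/(f)$-modules), I set $\mathcal B=\{\psi(e_1),\dots,\psi(e_n)\}$, where $e_1,\dots,e_n$ is the standard basis of $\bF^n$; a direct check shows $(m_\alpha)_{\mathcal B}=g$, so $\Phi_{\mathcal B}(m_\alpha)=g$. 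Since $\alpha$ is primitive, $\langle m_\alpha\rangle=\GL_1(\bK)$, whence $\Phi_{\mathcal B}(\GL_1(\bK))=\langle\Phi_{\mathcal B}(m_\alpha)\rangle=\langle g\rangle=S$, as required.
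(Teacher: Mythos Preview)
Your argument is correct. The direction ``$f$ primitive $\Rightarrow$ $\langle g\rangle$ Singer'' and the realisation of $S$ as $\Phi_{\mathcal B}(\GL_1(\bK))$ are essentially the paper's approach: both identify $V$ with $\bK=\bF_{q^n}$ so that $g$ (or $C_f$) becomes multiplication by a primitive root~$\alpha$.

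Where you genuinely diverge is in the converse. The paper argues via eigenvalues: from $|g|=q^n-1$ it asserts that $g$ has an eigenvalue $\alpha$ lying in $\bF_{q^n}$ and no smaller field, and that $\alpha$ itself has order $q^n-1$, whence the minimal polynomial of $\alpha$ (and hence of $g$) is primitive of degree~$n$. You instead work inside the commutative algebra $\bF[g]\cong\bF[X]/(f)$: the order of $g$ divides $|(\bF[X]/(f))^\times|$, whose $p'$-part is $\prod_i(q^{d_i}-1)$; the inequalities $\prod_i(q^{d_i}-1)<q^{\sum d_i}\le q^n$ then force $\prod_i(q^{d_i}-1)=q^n-1$, $\sum d_i=n$, and $k=1$, giving $f$ irreducible of degree $n$ with a root of multiplicative order $q^n-1$. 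Your route is more self-contained---the number-theoretic lemma is elementary and the module viewpoint avoids having to justify why \emph{some} eigenvalue (rather than merely the lcm of their orders) attains order $q^n-1$, a step the paper leaves to the reader. The paper's route, once that gap is filled, is shorter and more geometric. Both land in the same place for the final clause, since once $f$ is the common minimal and characteristic polynomial, conjugacy to $C_f$ is immediate.
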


%It will be convenient to describe precisely what we mean by a {\it field extension subgroup} of $\SL_n(q)$ or $\GL_n(q)$.
\begin{proof}
Suppose that $S=\langle g\rangle$ is a Singer subgroup. Then $g$ contains an eigenvalue $\alpha$ that lies in $\bK$, a degree $n$ extension of $\bF$, and no smaller field. What is more, since $g$ has order $q^n-1$, so does $\alpha$ and so the minimal polynomial of $g$ is primitive of degree $n$ as required.

Suppose, on the other hand, that $f$ is primitive of degree $n$. 
Then the eigenvalues of $g$ are $\alpha, \alpha^q, \dots, \alpha^{q^{n-1}}$; in particular they are all distinct. Elementary linear algebra implies that $g$ is conjugate to $C_f$, the companion matrix of $f$. It is enough, then, to prove that $\langle C_f\rangle$ is a Singer cycle.

Let $\alpha$ be a primitive element of degree $n$ over $\bF$ and a root of $f$; let $\bK=\bF(\alpha)$, an extension of $\bF$ of degree $n$. We construct a field-extension subgroup $G$ of degree $n$ in $\GL_n(q)$ as the image of the map $\Phi_\mathcal{B}:\GammaL_{\bK/\bF}(\bK) \to \GL_n(q)$ where $\mathcal{B}=\{\alpha, \alpha^2,\dots, \alpha^{n-1}\}$.

By construction $H$ is isomorphic to $\GammaL_{\bK/\bF}(\bK)$ and, in particular, contains a subgroup isomorphic to $\GL_1(\bK)\cong \bK^*$. This subgroup is cyclic of order $q^n-1$ and is generated by the invertible linear transformation
\[
L_\alpha : \bK \to \bK, x \mapsto \alpha\cdot x.
\]
Now our construction guarantees that $\Phi_\mathcal{B}(L_\alpha)=C_f$ and we conclude, as required, that $C_f$ generates a cyclic subgroup of $\GL_n(q)$ of order $q^n-1$. In fact we have shown that $\langle C_f\rangle=\Phi_{\mathcal{B}}(GL_1(\mathbb{K}))$ and the final statement follows.
\end{proof}

%Let $\bF$ be a field and $K=\bF(\alpha)$ an algebraic extension of $\bF$ of degree $d$. Let $W=K^a$ and  $V=(\bF)^{ad}$ where $a$ is some positive integer; observe that we can think of both $V$ and $W$ as vector spaces of $\bF$ of degree $ad$. Now let $\phi:K\to(\bF)^n$ be an invertible $\bF$-linear map and let $\GammaL_a(K)$ be the set of semilinear invertible transformations of $W$. 

%We can use $\phi$ to define an action of $\GammaL_a(K)$ on $V$ as follows: if $g\in \GammaL_a(K)$, $v\in V$, we write $v^g= \phi((\phi^{-1}(v))^g)$.
%It is an easy matter to check that this is a well-defined action; indeed each element of $\GammaL_a(K)$ induces a semilinear automorphism of $V$, thus it induces a homomorphism $\theta:\GammaL_a(K)\to\GammaL_{ad}(\bF)$. Finally one can check that this homomorphism is injective and so we have an embedding of $\GammaL_a(K)$ in $\GammaL_{ad}(\bF)$. Now if $X$ is a subgroup of $\GammaL_{ad}(\bF)$, then $X\cap \GammaL_a(K)$ is a {\it field extension subgroup} of $X$.

\section{Two companion matrices}

\begin{lem}
 Let $H$ be a field-extension subgroup of degree $a$ in $\GL_{ad}(q)$. A non-trivial element of $H$ fixes at most $(q^a)^{d-1}$ elements of $V=(\bF)^{ad}$.
\end{lem}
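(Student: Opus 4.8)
The plan is to transport the problem, via the permutation isomorphism $(\Phi_{\mathcal B},\theta)$ of Section~\ref{s: fe}, into the semilinear group $\GammaL_{\bK/\bF}(W)$, where now $\bK$ is the degree-$a$ extension of $\bF$ and $W=\bK^d$ is viewed simultaneously as a $d$-dimensional $\bK$-space and an $ad$-dimensional $\bF$-space. Under $\theta$ the vectors of $V=\bF^{ad}$ fixed by $h=\Phi_{\mathcal B}(\phi)$ correspond exactly to the vectors of $W$ fixed by $\phi$. Since every $\phi\in\GammaL_{\bK/\bF}(W)$ is $\bF$-linear, the fixed set $U:=\{w\in W:\phi(w)=w\}$ is an $\bF$-subspace of $W$, and the assertion $|U|\le (q^a)^{d-1}$ is equivalent to the dimension bound $\dim_\bF U\le a(d-1)$. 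Everything therefore reduces to controlling this fixed $\bF$-space, and the main point is that an element may be either $\bK$-linear or may carry a non-trivial field automorphism.

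Write $\sigma\in\mathrm{Gal}(\bK/\bF)$ for the field-automorphism part of $\phi$ and let $m$ be its order, so that $\phi^m$ lies in the $\bK$-linear subgroup $\GL_d(\bK)$. The easy and essential case is when $\phi^m\neq\mathrm{id}$ (in particular whenever $\phi$ is itself $\bK$-linear and non-trivial, where $m=1$): then $U\subseteq\ker(\phi^m-\mathrm{id})$, and the latter is a \emph{$\bK$-subspace} of $W$ which is proper because $\phi^m\neq\mathrm{id}$. Hence $\dim_\bK\ker(\phi^m-\mathrm{id})\le d-1$, and multiplying by $[\bK:\bF]=a$ gives $\dim_\bF U\le a(d-1)$, as required.

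The remaining, and genuinely harder, case is when $\phi$ carries a non-trivial field automorphism yet $\phi^m=\mathrm{id}$. Here $\langle\phi\rangle$ furnishes a $\sigma$-semilinear action of the cyclic group $\mathrm{Gal}(\bK/\bF')$ on $W$, where $\bF'$ is the fixed field of $\sigma$, of degree $a/m$ over $\bF$ and with $[\bK:\bF']=m$. I would invoke Galois descent (equivalently, the vanishing of the relevant $H^1$ for $\GL$, i.e.\ Hilbert~90) to conclude that $U$ is an $\bF'$-form of its $\bK$-span, so that $\dim_{\bF'}U=\dim_\bK\langle U\rangle_\bK\le d$ and hence $\dim_\bF U=(a/m)\dim_\bK\langle U\rangle_\bK\le (a/m)d$. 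An elementary estimate then yields $(a/m)d\le a(d-1)$ precisely when $d\ge 2$ (using $m\ge 2$): this descent step together with the numerical inequality is exactly where the work lies, and is the step I expect to be the main obstacle. The degenerate case $d=1$ is that of the normaliser of a Singer cycle and is governed instead by Lemma~\ref{l: singer}, so it lies outside the regime in which this bound is applied.
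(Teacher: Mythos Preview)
Your argument is correct for $d\ge 2$ and follows the same plan as the paper --- transport to $\GammaL_{\bK/\bF}(W)$ and treat $\bK$-linear and genuinely semilinear elements separately --- with two differences in execution. First, your device of passing to $\phi^m$ neatly absorbs some semilinear elements into the easy $\bK$-linear case; the paper does not do this. Second, for the residual case $\phi^m=\mathrm{id}$ you invoke Galois descent, whereas the paper proves the needed fact by hand: writing $\phi=h\sigma$ with $h$ linear and $\sigma$ a nontrivial field automorphism, the observation that $v\in U$ forces $v^h=v^{\sigma^{-1}}$, while $(cv)^h=c\,v^{\sigma^{-1}}\neq c^{\sigma^{-1}}v^{\sigma^{-1}}=(cv)^{\sigma^{-1}}$ whenever $c\notin\bF'$, shows $\bK v\cap U=\bF' v$ for every nonzero $v\in U$; hence an $\bF'$-basis of $U$ is $\bK$-independent and $|U|\le|\bF'|^{d}$. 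That two-line computation is exactly the descent statement you cite, so the ``main obstacle'' you anticipated dissolves.

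On $d=1$ you are right that the stated bound $(q^a)^{0}=1$ is actually false --- any nontrivial field automorphism in $\GammaL_1(q^a)$ fixes an entire subfield --- and the paper's closing ``the result follows'' has the identical gap (it tacitly needs $bd\le a(d-1)$, hence $d\ge 2$). Your pointer to Lemma~\ref{l: singer} is not the right patch, however, since that lemma says nothing about fixed-point counts. The honest repair for the downstream application in Corollary~\ref{c: two} is simply that the bound one \emph{does} obtain when $d=1$, namely $|U|\le q^{a/2}$, is still strictly below the $q^{n-1}$ vectors fixed by $C_f^{-1}C_g$ once $n=a\ge 3$.
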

\begin{proof}
We observed in \S\ref{s: fe} that the action of $H$ on $V$ is isomorphic to the action of $\GammaL_{\bK/\bF}(W)$ on $W=\bK^a$ where $\bK$ is a degree $d$ extension of $\bF$. Thus we set $\phi$ to be a non-trivial element of $\GammaL_{\bK/\bF}(W)$.
 
 If $\phi$ lies in $\GL_a(\bK)$ and is non-trivial, then basic linear algebra implies that the fixed-point set is a proper $\bK$-subspace of $W$ and so fixes at most $(q^a)^{d-1}$ elements of $W$.

Suppose that $\phi$ does not lie in $\GL_a(\bK)$. Thus we can write $\phi=h\sigma$ where $h$ is linear and $\sigma$ is a non-trivial field automorphism of $W$ that fixes $(\bF)^a$. 

Thus if $v\in \bK^a$ and $v^\phi=v$ we obtain immediately that $v^h=v^{\sigma^{-1}}$. Now if $c$ is a scalar that is not fixed by $\sigma$, then we obtain immediately that $(cv)^h\neq (cv)^{\sigma^{-1}}$. Since $v$ and $c$ were arbitary we conclude immediately that $g$ fixes at most $(q^b)^d$ elements where $b$ is some proper-divisor of $a$. The result follows.
 \end{proof}

\begin{cor}\label{c: two}
 If $C_f$ and $C_g$ are companion matrices of distinct monic polynomials $f,g \in\bF[x]$ of degree $n$, then $\langle C_f,C_g\rangle$ does not lie in a field-extension subgroup of $\GL_n(q)$.
\end{cor}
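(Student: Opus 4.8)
The plan is to argue by contradiction. Suppose $\langle C_f, C_g\rangle$ is contained in a field-extension subgroup $H$ of $\GL_n(q)$; being a proper subgroup, $H$ has degree $a\ge 2$ (in the notation of the preceding Lemma, with $ad=n$). I would then exhibit a single nontrivial element of $\langle C_f, C_g\rangle$ whose fixed-point set on $V=\bF^n$ is strictly larger than the Lemma permits for any nontrivial element of $H$.

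The element I would use is $h:=C_g^{-1}C_f$. The key observation is that the two companion matrices act identically on the first $n-1$ standard basis vectors: reading off the columns of the matrix displayed in the introduction, $f$ and $g$ influence only the last column, so $C_f e_i=e_{i+1}=C_g e_i$ for $1\le i\le n-1$. Applying $C_g^{-1}$ gives $C_g^{-1}e_{i+1}=e_i$ for $1\le i\le n-1$, whence $h\, e_i=C_g^{-1}C_f e_i=C_g^{-1}e_{i+1}=e_i$ for all such $i$. Thus the $\bF$-linear map $h$ fixes $e_1,\dots,e_{n-1}$, and since it is linear it fixes the whole $(n-1)$-dimensional subspace $\langle e_1,\dots,e_{n-1}\rangle$ pointwise; hence $h$ fixes at least $q^{n-1}$ vectors of $V$. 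Moreover $h\ne 1$, for otherwise $C_f=C_g$ and so $f=g$, contrary to hypothesis.

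It then remains to compare this count with the Lemma. As $H$ has degree $a\ge 2$, a nontrivial element of $H$ fixes at most $(q^a)^{d-1}=q^{n-a}\le q^{n-2}$ vectors of $V$, while $h$ is a nontrivial element of $H$ fixing at least $q^{n-1}$ of them; since $q^{n-2}<q^{n-1}$, this is the desired contradiction. The computation producing $h$ is routine, so the only genuine content is the choice of element: one must take $h=C_g^{-1}C_f$ rather than, say, $C_fC_g^{-1}$ or a commutator, precisely because this product is engineered to agree with the identity on as large a coordinate subspace as possible. The step I expect to require the most care is verifying that $h$ really fixes a subspace of codimension exactly one, which is what pushes the count above the Lemma's ceiling $q^{n-a}$ for every admissible $(a,d)$ with $a\ge 2$ and $ad=n$.
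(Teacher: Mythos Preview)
Your proof is correct and follows essentially the same approach as the paper: the paper likewise observes that $C_f$ and $C_g$ agree on $e_1,\dots,e_{n-1}$, forms the product $C_f^{-1}C_g$ (the inverse of your $h$, hence with the same fixed set), and invokes the preceding Lemma. Your write-up is slightly more explicit in spelling out the inequality $q^{n-a}\le q^{n-2}<q^{n-1}$; one small aside---your closing remark that $C_fC_g^{-1}$ would not work is not quite right, since that element fixes $e_2,\dots,e_n$ pointwise and so serves equally well.
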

\begin{proof}
We consider the action of $\GL_n(q)$ on $V=\bF^n$. 
Observe that the images of the first $n-1$ elementary basis vectors are the same for both $C_f$ and $C_g$. In particular, then, the matrix $C_f^{-1}C_g$ fixes the $\bF$-span of these $n-1$ vectors and so fixes at least $q^{n-1}$ vectors. The previous lemma implies that, since $C_f\neq C_g$, we can conclude that $\langle C_f,C_g\rangle$ is not a subgroup of a field-extension subgroup of $\GL_{n}(q)$. 
\end{proof}

\section{A result about subgroups}

To complete the proof of Theorem~\ref{t: main} we will need the result below, Theorem~\ref{t: kantor}. In an earlier draft of this article, we attributed this result to Kantor \cite{kantor}. We are grateful to Peter Mueller who pointed out that Kantor's result relies on another paper -- \cite{cameron_kantor} -- which has subsequently been found to contain a number of errors.

In fact it is clear that the errors in \cite{cameron_kantor} are not fatal and that, with a little adjustment, the result still holds \cite{cameron_kantor_blog}. However, since no proof exists in the literature, we will sketch one below. Our approach uses a theorem of Hering \cite{hering}, a proof of which can be found in \cite[Appendix 1]{liebeck}. The disadvantage of our proof is that it relies on the Classification of Finite Simple Groups (CFSG), which Kantor's original approach did not.

\begin{lem}\label{l: unique}
 Suppose that $S$ is a Singer cycle in $\GL_n(q)$. Then, for each integer $d$ dividing $n$, there is a unique field-extension subgroup $\Phi_\mathcal{B}(\GammaL_{\bK/\bF}(W))$ (where $\bK$ is a field extension of $\bF$ of degree $d$) that contains $S$.
\end{lem}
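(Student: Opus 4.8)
The plan is to establish existence and uniqueness separately, the latter being the real content. By Lemma~\ref{l: singer} I may assume $S=\Phi_{\mathcal B}(\GL_1(\bK))$, where $\bK$ is the degree-$n$ extension of $\bF$ and $S$ acts on $V\cong\bK$ as the multiplication group $\bK^*$. Fix a divisor $d$ of $n$, put $a=n/d$, and let $\bL\leq\bK$ be the subfield with $[\bL:\bF]=d$. Viewing $\bK$ as an $a$-dimensional $\bL$-space realises a degree-$d$ field-extension subgroup $E_0=\Phi_{\mathcal B}(\GammaL_{\bL/\bF}(\bK))$ of $\GL_n(q)$, and since multiplication by any element of $\bK^*$ is $\bL$-linear we have $S=\bK^*\leq\GL_a(\bL)\leq E_0$. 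This settles existence.

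For uniqueness, let $E$ be any degree-$d$ field-extension subgroup with $S\leq E$, and let $H\trianglelefteq E$ be its linear part, so that $H\cong\GL_a(\bL)$, $[E:H]=d$, and $Z(H)$ is cyclic of order $q^d-1$. The discussion of \S\ref{s: fe} (via \cite[Proposition 4.3.3(ii)]{kl}) gives $E=N_G(Z(H))$, so it is enough to show that $Z(H)$ is determined by $S$ and $d$ alone; concretely, I aim to prove that $Z(H)=S_d$, the unique subgroup of order $q^d-1$ of the cyclic group $S$. It is worth noting that, since $H=C_G(Z(H))$ and $C_G(S)=S$ (a Singer cycle is self-centralising), the inclusion $Z(H)\leq S$ is in fact equivalent to $S\leq H$; so the task is to force $S$ into the linear part.

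The key step, which I expect to carry the weight of the argument, is to exhibit an element of $S$ that is compelled to lie in $H$ and whose centraliser is exactly $S$. Let $r$ be a primitive prime divisor of $q^n-1$. Then $r\equiv 1\pmod n$, so $r>n\geq d=[E:H]$ and hence $r\nmid[E:H]$; as $H\trianglelefteq E$, this forces every Sylow $r$-subgroup of $E$, and in particular the Sylow $r$-subgroup $P$ of $S$, to lie in $H$. Since $Z(H)$ centralises $H\supseteq P$, we obtain $Z(H)\leq C_G(P)$. On the other hand, an element $t\in P$ of order $r$ satisfies $\bF(t)=\bK$ (because $r$ is primitive, no proper subfield contains the $r$-th roots of unity), so $t$ acts irreducibly and $C_G(t)=\bK^*=S$. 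Hence $C_G(P)=S$, giving $Z(H)\leq S$; as $|Z(H)|=q^d-1$ and $S$ is cyclic, $Z(H)=S_d$ and therefore $E=N_G(S_d)$ is uniquely determined.

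The only gap is the existence of the primitive prime divisor $r$, which by Zsygmondy's theorem fails only for $n\in\{1,2\}$ and for $(q,n)=(2,6)$. When $n\in\{1,2\}$ the only divisors are $d=1$ and $d=n$, for which the assertion is immediate (the subgroups being $\GL_n(q)$ and $N_G(S)$). The one substantive exception is $(q,n,d)=(2,6,3)$, where $q^d-1=7$; here I would argue directly, using the order-$7$ subgroup of $S$ together with its centraliser to pin down $Z(H)$, or simply verify the claim by hand. Disposing of this single small case is the main, if minor, nuisance of the approach.
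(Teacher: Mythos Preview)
Your argument is correct and in fact more scrupulous than the paper's. The paper proceeds differently: it takes an $H\cong\GL_{n/d}(q^d)$ containing $S$, observes that $S$ is then a Singer cycle \emph{of} $H$, realises $S$ as $\Phi_{\mathcal C}(\GL_1(\bL))$ for $\bL$ a degree-$n$ extension of $\bF$, and reads off directly that the centre of $H$ is the subgroup of $\bF_{q^d}$-scalars inside $\bL^*=S$, hence equals $S_d$. From $E=N_G(Z(H))=N_G(S_d)$ uniqueness follows. This is shorter, but it tacitly assumes that whenever $S$ lies in a field-extension subgroup $E$ it already lies in the linear part $H\trianglelefteq E$; the paper never justifies this. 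You identify exactly this point and supply a proof via a Zsygmondy prime $r$: since $r>n\geq d=[E:H]$ the $r$-part of $S$ is forced into $H$, and irreducibility of the $r$-element gives $Z(H)\leq C_G(P)=S$. So your approach buys a genuinely complete argument at the cost of handling the Zsygmondy exceptions; the paper's approach is cleaner once one grants $S\leq H$.

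Two small remarks on the exceptions. First, your statement of Zsygmondy for $n=2$ is slightly loose (failure occurs only when $q+1$ is a $2$-power), but since you dispose of all $n\leq 2$ anyway this is harmless. Second, for $(q,n)=(2,6)$ you mention only $d=3$; the case $d=2$ also needs a word, but there $\gcd(|S|,[E:H])=\gcd(63,2)=1$ forces $S\leq H$ immediately, after which the paper's direct identification applies. For $d=3$ your suggested use of the order-$7$ subgroup works: $7\nmid 3$ gives $P_7\leq H$, and one checks that $C_G(P_7)\cong\GL_2(8)$ with $S$ a Singer cycle therein, whence any order-$7$ subgroup normalised by $S$ lies in $S$.
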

\begin{proof}
Let $H$ be a subgroup of $\GL_n(q)$ that contains $S$ and suppose that $H\cong \GL_{n/d}(q^{d})$ for some divisor $d$ of $n$. Now $S$ is a Singer cycle in $H$ and so $S= \Phi_{\mathcal{C}}(\GL_1(\bL))$ where $\bL$ is a degree $n/d$ extension of $\bF_{q^d}$.

Write $Z$ for the unique subgroup of $S$ of order $q^d-1$. Direct calculation confirms that $Z$ coincides with the center of $H$. Thus $H\leq C_{\GL_n(q)}(Z)$. But $Z$ is precisely the $\bF_{q^d}$-scalar maps on $\bL$, and so (as we saw earlier, using \cite[Proposition 4.3.3(ii)]{kl}) $N_{\GL_n(q)}(Z)$ is a field-extension subgroup $\Phi_\mathcal{B}(\GammaL_{\bK/\bF}(\bL))$ where $\bK$ is a field extension of $\bF$ of degree $d$. But now $H$ must be the unique normal subgroup of this field-extension subgroup that is isomorphic to $\GL_{n/d}(q^d)$ and we are done.
\end{proof}

In the proof above we refer to two ordered $\bF$-bases of $\bL$, namely $\mathcal{B}$ and $\mathcal{C}$. It is an easy exercise to see that we can take $\mathcal{B}$ to be equal to $\mathcal{C}$.

\begin{thm}\label{t: kantor}
Let $L$ be a proper subgroup of $G=\GL_n(q)$  that contains a Singer cycle. Then $L$ contains a normal subgroup $H$ isomorphic to $\GL_{a}(q^c)$ with $n=ac$ and $c>1$. What is more $H$ is equal to $\Phi_\mathcal{B}(\GL_{a}(\bK))$ for $\bK$ some field extension of $\bF$ of degree $c$, and $\mathcal{B}$ some ordered $\bF$-basis of $\bK^{a}$.
\end{thm}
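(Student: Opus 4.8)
The plan is to read off the structure of $L$ from Hering's classification of transitive linear groups \cite{hering} (see \cite[Appendix 1]{liebeck}). First I would note that a Singer cycle acts regularly on the non-zero vectors of $V=\bF^n$: by Lemma~\ref{l: singer} we may assume $S=\Phi_\mathcal{B}(\GL_1(\bK))$ with $[\bK:\bF]=n$, and in this form $S$ is the group of multiplications by $\bK^*$ on $\bK\cong V$, which is sharply transitive on $V\setminus\{0\}$. Hence $L$ is transitive on $V\setminus\{0\}$. Writing $q=p^m$ and regarding $V$ as $\bF_p^{D}$ with $D=nm$, the group $S$ is cyclic of order $p^{D}-1$, so it is also a Singer cycle of $\GL_{D}(p)$, and $L\leq\GL_n(q)\leq\GL_{D}(p)$ is a transitive linear group to which Hering's theorem applies.

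Hering's theorem then tells us that one of the following holds: (i) $L\leq\GammaL_1(p^{D})$; (ii) $L$ has a normal subgroup isomorphic to $\SL_a(q_0)$ with $q_0^{a}=p^{D}$ and $a\geq2$; (iii) $L$ has a normal subgroup isomorphic to $\mathrm{Sp}_a(q_0)'$ or $G_2(q_0)'$; or (iv) $(D,p)$ belongs to a bounded list of exceptional configurations. The main obstacle is to discard (iii) and (iv), and here I would exploit that $L$ contains the cyclic group $S$ of maximal order $p^{D}-1$. In case (iii) one has $L\leq N_{\GL_D(p)}(\mathrm{Sp}_a(q_0)')$ (resp.\ $G_2(q_0)'$), which lies in the corresponding similitude group extended by field automorphisms; a comparison of orders shows that the largest cyclic subgroup of such a group is strictly smaller than $p^{D}-1$ once $a\geq4$ (and likewise for $G_2(q_0)'$ in its range), while $\mathrm{Sp}_2=\SL_2$ contributes nothing new. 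The finitely many configurations in (iv) have $D$ bounded, and a direct check of the (small) orders involved shows that none yields an overgroup of $S$ inside $\GL_D(p)$ beyond those already covered by (i) and (ii). This order analysis, which leans on the classification of the relevant groups, is where the bulk of the work lies and is the CFSG-dependent input flagged in the statement.

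It then remains to treat (i) and (ii) together. In (i) the base group $\GL_1(p^{D})$ is the unique cyclic subgroup of $\GammaL_1(p^{D})$ of order $p^{D}-1$, so $S=\GL_1(p^{D})\trianglelefteq L$; this is the case $a=1$, $H=S\cong\GL_1(q^n)$. In (ii), put $H_0=\SL_a(q_0)\trianglelefteq L$. As $\SL_a(q_0)$ acts absolutely irreducibly, its centralizer in $\mathrm{End}_{\bF_p}(V)$ is the ring of $\bF_{q_0}$-scalars and its normalizer in $\GL_D(p)$ is $\GammaL_a(q_0)$; hence $L\leq\GammaL_a(q_0)$. Since $H_0\leq L\leq\GL_n(q)$, the elements of $H_0$ are $\bF_q$-linear, so the $\bF_q$-scalars commute with $H_0$ and therefore lie in its centralizer $\bF_{q_0}$; this gives $\bF_q\subseteq\bF_{q_0}$, so $q_0=q^{c}$ with $n=ac$.

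Finally I would upgrade $\SL$ to $\GL$ and identify $H$. Viewing $S$ as multiplication by $\bF_{q_0^{a}}^*$, its determinant as an $\bF_{q_0}$-linear map is the norm $N_{\bF_{q_0^a}/\bF_{q_0}}$, which is onto $\bF_{q_0}^*$; hence $\det(S)=\bF_{q_0}^*$, and since $H_0=\ker(\det)$ we obtain $\langle H_0,S\rangle=\GL_a(q_0)=:H$. Because $\GL_a(q_0)\trianglelefteq\GammaL_a(q_0)$ and $L\leq\GammaL_a(q_0)$, we get $H\trianglelefteq L$ (case (i) being $a=1$, $H=S$). If $c=1$ then $H=\GL_n(q)=G\leq L$, contradicting properness of $L$, so $c>1$. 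Lastly $Z(H)$ is the group of $\bF_{q_0}$-scalars, so by \cite[Proposition 4.3.3(ii)]{kl}, exactly as in \S\ref{s: fe} and Lemma~\ref{l: unique}, $N_G(Z(H))$ is a field-extension subgroup $\Phi_\mathcal{B}(\GammaL_{\bK/\bF}(\bK^{a}))$ with $[\bK:\bF]=c$, of which $H=\Phi_\mathcal{B}(\GL_a(\bK))$ is the normal $\GL_a(\bK)$-part. This is the desired conclusion.
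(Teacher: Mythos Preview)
Your approach is correct and is essentially the paper's: both embed $L\leq\GL_n(q)$ into $\GL_D(p)$ with $D=n\log_pq$, apply Hering's classification of transitive linear groups, discard the symplectic, $G_2$, and sporadic families by bounding the order of a cyclic subgroup against $|S|=p^{D}-1$, and then show that in the $\SL_a(q_0)$ case one obtains a normal $\GL_a(q_0)$ sitting as the linear part of a field-extension subgroup. The only differences are in bookkeeping: the paper isolates $n=2$ first via Dickson's list and, at the end, appeals to Lemma~\ref{l: unique} to match up the two field-extension subgroups containing $S$; you instead observe directly that $\bF_q\subseteq C_{\mathrm{End}_{\bF_p}(V)}(H_0)=\bF_{q_0}$ (whence $q_0=q^{c}$) and upgrade $\SL$ to $\GL$ via the norm computation $\det_{\bF_{q_0}}(S)=N_{\bF_{q_0^{a}}/\bF_{q_0}}(\bF_{q_0^{a}}^{*})=\bF_{q_0}^{*}$, which is a clean alternative to the paper's ``direct computation inside $\GammaL_{\bK_0/\bF_p}(W)$''.
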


%Before proving Theorem~\ref{t: kantor}, we recall that, in \S\ref{s: fe}, we remarked that $N_G(H)$ is a field-extension subgroup.

\begin{proof}
It is convenient, first, to deal with the case when $n=2$. If $L$ lies inside the normalizer of a non-split torus, then $L$ contains a normal subgroup $H\cong \GL_1(q^2)$, as required. Furthermore, order considerations imply that $L$ is a subgroup of neither the normalizer of a split torus, nor a Borel subgroup of $\GL_2(q)$.

The remaining subgroups of $\GL_2(q)$ can be deduced from a classical theorem of \cite{dickson}. In particular, $L\cap \SL_2(q)$ is isomorphic to either $A_4, S_4, A_5$ or a double cover of one of these. In particular the maximal order of an element of $L\cap \SL_2(q)$ is $10$. Since $L\cap \SL_2(q)$ must contain an element of order $q+1$, we conclude that $q\leq 9$. Now computation in the remaining groups (using, for example, \cite{gap}) rules out the remaining possibilities.

Assume, then that $n\geq 3$, and we refer to Hering's Theorem, as presented in \cite[Appendix 1]{liebeck}. This result lists those subgroups of $\GL_{\ell}(p)$  (for $\ell\in\mathbb{Z}^+$) that act transitively on the set of non-zero vectors of $(\mathbb{F}_p)^{\ell}$. Since $G$ embeds naturally (inside a field extension subgroup) in $\GL_{\ell}(p)$ for $\ell=n\log_pq$ and, since a Singer cycle acts transitively (via this embedding) on the set of non-zero vectors in $(\mathbb{F}_p)^{\ell}$, this list contains all the possible groups $L$. In what follows we fix a field-extension embedding 
\[
\Phi_{\mathcal{D}}: G \hookrightarrow \GL_{\ell}(p)
\]
for $\ell=n\log_pq$, and $\mathcal{D}$ an ordered $\bF_p$-basis of $(\bF)^n$. We obtain an associated action on the vector space $V=(\mathbb{F}_p)^{\ell}$, and apply the theorem. 

According to Hering's Theorem, the group $L$ lies in one of three class (A), (B) and (C). Given that $\ell\geq n\geq 3$, the classes (B) and (C) reduce to the following possibilities:
\begin{enumerate}
\item $L=A_6, A_7$ or $\SL_2(13)$; $G=\GL_4(2), \GL_6(3)$ or $\GL_3(9)$.
\item $L$ has a normal subgroup $R\cong D_8\circ Q_8$, $L/R\leq S_5$ and $G=\GL_4(3)$.
\end{enumerate}
In the first case, we note that all elements of $L$ have order less than or equal to $14$, and this case is immediately excluded. Similarly, in the second case, all elements of $L$ have order less than or equal to $48$, and this case is immediately excluded.

We are left with groups in Liebeck's class A. These come in four families; we examine them one at a time. For family (1), $L$ is a subgroup of the normalizer of a Singer cycle. The result follows immediately in this case. For the remaining families, $L$ has a normal subgroup $N$ isomorphic to $\SL_a(q_0)$, ${\rm Sp}_{a}(q_0)$ or $G_2(q_0)$ with $q_0=p^d$ and $\ell=ad$.

By examining the proof in \cite{liebeck}, we find that, in all cases, $L$ lies in a field-extension subgroup $\Phi_\mathcal{C}(\GammaL_{\bK_0/\bF_p}(W))$ of $\GL_\ell(p)$, for $\bK_0$ some field extension of $\bF_p$ of degree $d\in \mathbb{Z}^+$ and $\mathcal{C}$ some ordered $\bF_p$-basis of $W=(\bK_0)^{a}$. What is more $q_0=q^{d}$ and $N\leq \Phi_\mathcal{C}(\GL_{a}(\bK_0))$.

In the symplectic case, this means that the action of $N$ on $(\bK_0)^a$ yields the natural module for ${\rm Sp}_a(\bK_0)$ (see, for instance, \cite[Proposition 5.4.13]{kl}. Now one can check that an irreducible cyclic subgroup of ${\rm Sp}_a(q_0)$ in the natural module has size dividing $q_0^{a/2}+1$ (see, for instance, \cite{bereczky}). Now Schur's Lemma implies that an irreducible cyclic subgroup of $L$ has order dividing ${(q_0^{a/2}+1)2(q_0-1)\log_p(q_0)}$. Since this must be at least $q_0^a-1$, one immediately obtains that $a/2=1$ and, since ${\rm Sp}_2(\bK_0)\cong \SL_2(\bK_0)$ we are in one of the remaining cases.

If $G=G_2(q_0)$, then the proof in \cite{liebeck} implies that, in fact, $N$ is a subgroup of a symplectic group ${\rm Sp}_6(q_0)$ that acts on $(\bK_0)^6$ via its natural module. Thus this situation can be excluded via the calculation of the previous paragraph.

We are left with the case where 
\[
N\cong \SL_a(q_0)\lhd L \leq \Phi_\mathcal{C}(\GammaL_{\bK_0/\bF_p}(W))\leq \GL_\ell(p).
\]
Direct computation inside $\GammaL_{\bK_0/\bF_p}(W)$ confirms that, since $L$ contains a cyclic group of order $p^\ell-1$, $L$ must contain $M=\Phi_\mathcal{C}(\GL(W))\cong GL_a(q_0)$ as a normal subgroup.

Observe, then, that the Singer cycle $S$ lies in two field extension subgroups of $\GL_d(p)$, namely $N_{\GL_d(p)}(G)$ and $N_{\GL_d(p)}(M)$.  Notice, though, that by Lemma~\ref{l: singer}, $S=\Phi_\mathcal{B}(GL_1(\bL))$ for some ordered $\bF_p$-basis $\mathcal{B}$ of $\bL$, a degree $n$ extension of $\bF_p$. Clearly the groups $\Phi_\mathcal{B}(\GammaL_{\bF/\bF_p}(\bL))$ and $\Phi_\mathcal{B}(\GammaL_{\bK_0/\bF_p}(\bL))$ are also field extension subgroups that contain $S$.

Now Lemma~\ref{l: unique} implies that $M=\Phi_\mathcal{B}(\GL_a(\bK_0))$ and $G=\Phi_\mathcal{B}(\GL_n(\bF))$. The second occurrence of the monomorphism $\Phi_\mathcal{B}$ here is simply a restriction of the first; it is an easy exercise to check that, in this situation, $M$ is a field-extension subgroup of $G$ as required.
\end{proof}

\section{Proving Theorem~\ref{t: main}}

Observe that if $f$ and $g$ are as in Theorem~\ref{t: main}, then they both have non-zero constant term and hence are invertible and so lie in $\GL_n(q)$. Now Lemma~\ref{l: singer}, Corollary~\ref{c: two} and Theorem~\ref{t: kantor} imply that $\langle C_f, C_g\rangle$ does not lie in a proper subgroup of $\GL_n(q)$. In other words $\langle C_f, C_g\rangle = \GL_n(q)$, as required.

\end{document}